\newcommand{\rmod}{\!\!\!\!\pmod}
\newcommand{\Zz}{\mathbb{Z}}
\newcommand{\Cc}{\mathbb{C}}
\newcommand{\Hh}{\mathbb{H}}
\newtheorem{thm}{Theorem}[section]
\newtheorem{lem}[thm]{Lemma}
\newtheorem{prop}[thm]{Proposition}
\newtheorem{cor}[thm]{Corollary}
\newtheorem{rmk}{Remark}[section]
\title[Subconvexity for $\rm{GL}(3)$ L-functions]{On the subconvexity estimate for self-dual $\rm{GL}(3)$ L-functions in the $t$-aspect}
\author{Ramon M. Nunes} \thanks{This work is supported by the DFG-SNF lead agency program grant 200021L-153647.}
\date{\today}
\begin{document}

\maketitle

\begin{abstract}
We improve on the subconvexity bound for self-dual $\rm{GL}(3)$ $L$-functions in the $t$-aspect. Previous results were obtained by Li and by Mckee, Sun and Ye.
\end{abstract}

\section{Introduction}

In this paper we prove a subconvexity bound for certain degree 3 $L$-functions. Let $\phi$ be a self-dual Hecke-Maass form for $\rm{SL}(3,\Zz)$, or equivalently, let $\phi$ be the symmetric square lift of a Maass form for $\rm{SL}(2,\Zz)$. Then we have the following upper bound
$$
L(1/2+it,\phi)\ll_{\epsilon,\phi} t^{5/8+\epsilon}.
$$
Previous results were obtained by Li \citep{Li2011bounds} and more recentely, by Mckee, Sun and Ye \citep{mckee2015improved} who had $\ll t^{11/16+\epsilon}$ and $\ll t^{2/3+\epsilon}$ respectively. It is also worth mentioning Munshi's work \citep{munshi2015circle}, where he proved the bound $\ll t^{11/16+\epsilon}$ on a more general setting where the forms are not necessarily self-dual.

A common feature of the results in \citep{Li2011bounds}, \citep{mckee2015improved} and those of this paper is that they are deduced from an average result over the spectrum of the laplacian on $\rm{SL}(2,\Zz)\backslash \Hh$, where $\Hh$ is the upper half plane of complex numbers with positive imaginary part with the usual action of $\rm{SL}(2,\Zz)$. It is crucial for the result to work in the end that we have the positivity of certain $\rm{GL}(3)\times\rm{GL}(2)$ $L$-functions. Let $f$ be a Hecke-Maass form for $\rm{SL}(2,\Zz)$, then we have the inequality
$$
L(1/2,\phi\times f)\geq 0.
$$
This result follows from the work of Lapid \citep{lapid2003non}.

Munshi follows a different path where he does not need this positivity result. In fact these $\rm{GL}(3)\times \rm{GL}(2)$ $L$-functions do not even appear in his work.

\subsection{Statement of the main result}

We start by stating the average result from which we deduce the subconvexity bound. We refer the reader to section \ref{lfunctions} for a precise definition of the $L$-functions involved in it.

Let $\{f_j\}_j=\mathcal{B}$ be an orthonormal basis of Hecke-Maass forms for $\rm{SL}(2,\Zz)$, where $f_j$ is an eigenform for the Laplacian with eigenvalue $\frac14+t_j^2$.

\begin{thm}\label{GL3}
For every $\epsilon>0$, $T\geq 1$ and $\Delta\geq T^{\epsilon}$, we have the inequality
\begin{equation*}
\sideset{}{^{\prime}}\sum_{\substack{f_j\in \mathcal{B}\\T-\Delta<t_j\leq T+\Delta}}L\left(1/2,\phi\times f_j\right)+\frac{1}{4\pi}\int_{T-\Delta}^{T+\Delta}\left|L\left(1/2+it,\phi\right)\right|^2dt \ll \Delta T^{5/4+\epsilon},
\end{equation*}
where the symbol $\sideset{}{^{\prime}}\sum$ means that we only sum over the even forms.
\end{thm}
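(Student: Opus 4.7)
The plan is to prove Theorem~\ref{GL3} via the (even) Kuznetsov trace formula, combined with an approximate functional equation for $L(1/2,\phi\times f_j)$ and $\mathrm{GL}(3)$ Voronoi summation, in the spirit of~\citep{Li2011bounds} and~\citep{mckee2015improved}. First, I would fix a smooth weight $h$ localised on $t\in[T-\Delta,T+\Delta]$ and invoke the approximate functional equation, which for fixed spectral parameters of $\phi$ has effective length $t_j^{3}$, to write
\[
L(1/2,\phi\times f_j)\approx \sum_{m,n}\frac{A(m,n)\lambda_{f_j}(n)}{\sqrt{m^{2}n}}\,V\!\left(\frac{m^{2}n}{t_j^{3}}\right)
\]
together with a similar dual sum, where $A(m,n)$ denote the Hecke eigenvalues of $\phi$. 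After interchanging summations and using $L(1,\mathrm{Sym}^{2}f_j)\asymp T^{\pm\epsilon}$ to insert a harmless $\mathrm{Sym}^{2}$-weight, the spectral object to estimate, for each $(m,n)$, is
\[
\Sigma_{m,n}=\sum_{j}\frac{h(t_j)V(m^{2}n/t_j^{3})\lambda_{f_j}(n)}{L(1,\mathrm{Sym}^{2}f_j)}+\frac{1}{4\pi}\int_{-\infty}^{\infty}\frac{h(t)V(m^{2}n/t^{3})\eta_{it}(n)}{|\zeta(1+2it)|^{2}}\,dt,
\]
where the spectral sum runs over even $f_j$ and $\eta_{it}(n)=\sum_{ab=n}(a/b)^{it}$ encodes the Hecke eigenvalues of the unitary Eisenstein series. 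When $\Sigma_{m,n}$ is summed back against $A(m,n)/\sqrt{m^{2}n}$, the continuous-spectrum integral reconstructs precisely the $|L(1/2+it,\phi)|^{2}$ term on the left-hand side of the theorem.

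The Kuznetsov trace formula now converts $\Sigma_{m,n}$ into a diagonal term supported on $n=1$ (of size $\ll\Delta T^{1+\epsilon}$, harmless against the target) plus an off-diagonal
\[
\sum_{c\geq 1}\frac{S(1,n;c)}{c}\,\Phi_{m,n}\!\left(\frac{4\pi\sqrt{n}}{c}\right),
\]
where $\Phi_{m,n}$ is the Bessel transform of $t\mapsto h(t)V(m^{2}n/t^{3})$. A standard stationary-phase analysis shows that $\Phi_{m,n}$ is essentially concentrated on the range $c\asymp\sqrt{n}/T$, with effective support $n\ll T^{3}/m^{2}$, so the off-diagonal to bound is
\[
\mathcal{O}=\sum_{m\leq T^{3/2}}\sum_{n\ll T^{3}/m^{2}}\frac{A(m,n)}{\sqrt{m^{2}n}}\sum_{c\asymp\sqrt{n}/T}\frac{S(1,n;c)}{c}\,\Phi_{m,n}\!\left(\frac{4\pi\sqrt{n}}{c}\right).
\]

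To handle $\mathcal{O}$, I would next dualise the $n$-sum via the $\mathrm{GL}(3)$ Voronoi summation formula, which replaces $A(m,n)$ by dual Fourier coefficients multiplied by hyper-Kloosterman sums attached to the denominator and by a $\mathrm{GL}(3)$ Bessel-type integral transform of degree three. Locating the stationary point of the resulting integral, combining Weil's bound for $S(1,n;c)$ with Deligne's bound for the hyper-Kloosterman factors, opening the short character sums that survive, and finally performing a Cauchy--Schwarz in a well-chosen variable should deliver the announced bound $\ll\Delta T^{5/4+\epsilon}$.

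The main obstacle---and the point at which the present argument must sharpen \citep{Li2011bounds} and \citep{mckee2015improved}---is the transition range in which the dual sum produced by Voronoi carries an analytic conductor still of order $T^{3}$: here neither the trivial bound nor the sharpest stationary-phase simplification suffices. Treating this range efficiently, presumably by exploiting additional cancellation over the Kloosterman modulus $c$ combined with a careful stationary-phase analysis of the triple Bessel transform, is what should drive the exponent $5/4$ on the right-hand side of the theorem and hence yield the subconvexity estimate $t^{5/8+\epsilon}$.
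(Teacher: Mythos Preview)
Your overall architecture---approximate functional equation, Kuznetsov, then $\mathrm{GL}(3)$ Voronoi on the $n$-sum---matches the paper. The gap is in the endgame. You propose to finish by applying Weil's bound to $S(1,n;c)$ and Deligne's bound to the hyper-Kloosterman sums coming out of Voronoi, followed by Cauchy--Schwarz. This is essentially the route of \citep{Li2011bounds} and \citep{mckee2015improved}, and it does \emph{not} yield the exponent $5/4$; you acknowledge as much when you say the ``transition range'' must be handled by some unspecified extra cancellation. The paper's improvement does not come from sharper pointwise exponential-sum bounds or from a more refined stationary-phase treatment of the transition range.

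What the paper actually does after Voronoi is qualitatively different. The exponential sum $\sum_{d}^{\ast}e(d/c)S(\overline{\delta'}d,\pm n_2;c'/n_1)$ is not bounded but \emph{evaluated} explicitly (following Blomer \citep[Lemma~12]{Blomer2012twisted}); the outcome contains a factor $e(\mp\sigma\,\delta_0 n_1^{2}n_2/(\delta' c'))$. Independently, the stationary-phase analysis of the Bessel/Voronoi transform produces $\mathcal{W}^{\pm}(x;D)\approx e(\mp\sigma x/D^{2})\cdot\Delta\, x^{1/2}\int_{|t|\ll T}\lambda(t)(x/D^{2})^{it}\,dt$. The two exponential phases cancel exactly, leaving a smooth bilinear object of the shape
\[
\Delta\int_{|t|\ll T}\Big|\sum_{d\ll T^{1/2}}\sum_{n\ll T^{3/2}}\frac{A(1,n)}{(dn)^{1/2}}\Big(\frac{n}{d}\Big)^{it}\Big|\,dt,
\]
and the final bound $\Delta T^{5/4+\epsilon}$ is obtained by Cauchy--Schwarz together with the archimedean large sieve (Gallagher's inequality), not by Deligne-type estimates. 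This cancellation of phases and the subsequent large-sieve step are the missing ideas in your proposal; without them the argument stalls at the exponents of \citep{Li2011bounds} and \citep{mckee2015improved}.
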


By taking $\Delta=T^{\epsilon}$ and using the positivity of $L\left(\frac{1}{2},\phi\times f_j\right)$, we deduce

\begin{cor}\label{subconv}
We have the following bounds:
$$
L\left(1/2,\phi\times f_j\right)\ll_{\phi} t_j^{5/4+\epsilon}\text{ and }L\left(1/2+it,\phi\right)\ll_{\phi} t^{5/8+\epsilon}.
$$
\end{cor}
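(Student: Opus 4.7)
My plan is to deduce both bounds directly from Theorem~\ref{GL3} with $\Delta=T^{\epsilon}$, by combining positivity on the spectral side with a Sobolev-type $L^{2}$-to-$L^{\infty}$ conversion on the continuous side.

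\textbf{First bound.} I fix an even Hecke-Maass form $f_{j_{0}}$ with spectral parameter $t_{j_{0}}$ and apply Theorem~\ref{GL3} with $T=t_{j_{0}}$ and $\Delta=T^{\epsilon}$. The form $f_{j_{0}}$ is among those counted by the primed sum. By Lapid's positivity \citep{lapid2003non}, $L(1/2,\phi\times f_{j})\geq 0$ for every even $f_{j}$, and the integral is manifestly non-negative. Dropping all other contributions yields
$$L(1/2,\phi\times f_{j_{0}})\ll\Delta T^{5/4+\epsilon}\ll t_{j_{0}}^{5/4+2\epsilon},$$
which is the first inequality. For odd $f_{j}$ the central value vanishes by the functional equation, so the bound is trivial in that case.

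\textbf{Second bound.} I fix $t\geq 1$, take $T=t$ and $\Delta=T^{\epsilon}$, and drop the non-negative spectral sum from the left of Theorem~\ref{GL3} to obtain the short-interval mean square
$$\int_{T-\Delta}^{T+\Delta}|L(1/2+it',\phi)|^{2}\,dt'\ll T^{5/4+2\epsilon}.$$
To extract a pointwise bound at $t'=T$, I use a Sobolev-type estimate: integrating the identity $L(1/2+iT,\phi)=L(1/2+iu,\phi)+\int_{u}^{T}iL'(1/2+iv,\phi)\,dv$ over $u\in[T-\Delta,T+\Delta]$ and applying Cauchy-Schwarz gives
$$|L(1/2+iT,\phi)|^{2}\ll\Delta^{-1}\int_{T-\Delta}^{T+\Delta}|L(1/2+iu,\phi)|^{2}\,du+\Delta\int_{T-\Delta}^{T+\Delta}|L'(1/2+iu,\phi)|^{2}\,du.$$
The first term is $\ll T^{5/4+\epsilon}$ by the above. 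The second is controlled by using Cauchy's integral formula to bound $L'$ pointwise by an average of $L$ on a small circle, followed by a short-interval mean square bound for $L$ on a line slightly off $\Re(s)=1/2$; the latter is obtained by a simple contour shift in the proof of Theorem~\ref{GL3}. This gives $\Delta\int|L'|^{2}\ll T^{5/4+3\epsilon}$, hence $|L(1/2+iT,\phi)|^{2}\ll T^{5/4+3\epsilon}$ and $L(1/2+it,\phi)\ll t^{5/8+\epsilon}$.

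The substantive content is Theorem~\ref{GL3} itself. The only nontrivial step specific to the corollary is the $L^{2}$-to-$L^{\infty}$ conversion, which in turn requires the second moment bound for $L(\sigma+it,\phi)$ on lines with $\sigma$ slightly greater than $1/2$. This is not part of the statement of Theorem~\ref{GL3} but is obtained with no additional difficulty from the same methods, and is the main (minor) obstacle beyond the theorem itself.
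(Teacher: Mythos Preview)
Your approach is the same as the paper's: set $\Delta=T^{\epsilon}$ in Theorem~\ref{GL3} and use Lapid's nonnegativity to isolate a single term. The paper states the deduction in one line and does not spell out the $L^{2}$-to-$L^{\infty}$ conversion for the continuous part; your Sobolev argument is a standard way to supply this, though note that your bound on $\int|L'|^{2}$ quietly relies on an off-line analogue of Theorem~\ref{GL3}, which is routine but is an extra ingredient you are importing rather than something already proved in the paper.
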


In a related paper \citep{Blomer2012twisted}, Blomer proved a bound for quadratic twists of $L$-functions such as the ones considered in the present paper. Let $q$ be a prime number. Suppose that $f$ is a primitive Hecke-Maass form of level dividing $q$ and let $\phi$ be as above, then he proved the inequalities
\begin{equation}\label{boundsblomer}
L\left(\frac{1}{2},\phi\times f\times \chi_q\right)\ll_{\phi,f} q^{5/4+\epsilon}\text{ and }L\left(\frac{1}{2}+it,\phi\times \chi_q\right)\ll_{\phi,t} q^{5/8+\epsilon},
\end{equation}
where $\chi_q$ denotes the non-trivial quadratic character of conductor $q$.

We remark that the exponents in Corollary \ref{subconv} match exactly those of \eqref{boundsblomer}. This is not a coincidence. In fact, our method can be seen as the archimedean analog of that of \citep{Blomer2012twisted}. The way in which we prepare the ground in order to use the archimedean version of the large sieve is inspired by ideas of Young \citep{Young2014weyl}, who proved the hybrid bound
$$
L\left(\frac{1}{2}+it,\chi_q\right)\ll (tq)^{1/6+\epsilon}.
$$
Young's method generalise the work of Conrey and Iwaniec \citep{CI2000} and has the nice feature that it treats the $t$ and $q$ aspect on the same footing.

With some considerable extra effort, we believe that the techniques in this paper should allow for proving the hybrid bound 
$$
L(1/2+it,\phi\times \chi_q)\ll(tq)^{5/8+\epsilon},
$$
thus improving the main result of a recent preprint by Huang \citep{huang2016hybrid}. In order to simplify the exposition, we decided to stick to this more restrictive case.

Finally, we would like to mention that the idea of using Young's method in this $\rm{GL}(3)$ context was already present in Huang's paper. Nevertheless he did not improve on the exponent $2/3$ by Mckee, Sun and Ye. The main reason for this is that in his work (as was the case in \citep{Li2011bounds} and \citep{mckee2015improved}), Huang looks for upper bounds for the the sum in Theorem \ref{GL3} that agree with the generalized Lindel\"of hypothesis. This restriction forces the length of the sum to be larger. He needs $\Delta\geq T^{1/3}$. Thus the implied subconvexity bound is worse.

\subsection{Bounds for triple products}

Let $\psi$ be a Maass form for $\rm{SL}(2,\Zz)$. By combining Corollary \ref{subconv} for $\phi=\operatorname{sym}^2\psi$ with Ivìc's bound \citep{Ivic2001sums}
$$
L(1/2,f_j)\ll t_j^{1/3+\epsilon},
$$
one gets the bound
$$
L(1/2,\psi\times\psi\times f_j)\ll_{\psi} t_j^{19/12+\epsilon}
$$
We remark that this estimate improves on the bound
$$
L(1/2,\psi_1\times\psi_2\times f_j)\ll_{\Psi_1,\Psi_2} t_j^{5/3+\epsilon},
$$
proved by Bernstein-Reznikov \citep{BR2010subconvexity} in the particular case where $\psi_1=\psi_2$.
However, it seems that an even stronger bound (with the exponent $4/3$) should follow from the methods of Suvitie \citep{suvitie2010short} who proved such a bound in the analog problem for a holomorphic modular forms in the weight aspect.
\subsection{Outline of the proof}

This article belongs to a long line of papers building upon the breakthrough work of Conrey and Iwaniec \citep{CI2000}. An expert in the field will easily be able to recognize the many similarities and the few differences. We also recognize the great deal of influence from \citep{Blomer2012twisted} and \citep{Young2014weyl}.

Our goal is to bound the sum

$$
\sum_{\substack{f_j\in\mathcal{B}\\T-\Delta\leq t_j\leq T+\Delta}}L\left(1/2,\phi\times f_j\right) + (Eis),
$$
% +\int_{T-\delta}^{T+\Delta}|L\left(1/2,\phi\right)|^2dt
%
%
where $(Eis)$ corresponds to the second term in Theorem \ref{GL3}, i.e. the Eisenstein contribution. We use the approximate functional equation for the Rankin-Selberg $L$-function $L\left(1/2,\phi\times f_j\right)$ and we get a sum that looks like
$$
\sum_{\substack{f_j\in\mathcal{B}\\T-\Delta\leq t_j\leq T+\Delta}}\sum_{m^2n\leq T^{3+\epsilon}}\frac{A(n,m)\lambda_j(n)}{(m^2n)^{1/2}}+(Eis),
$$
where $(Eis.)$ stands for a similar term corresponding to the contribution from the Eisenstein series.
By changing the order of summation and using the Kuznetsov formula, one gets to a sum like
$$
\sum_{m^2n\leq T^{3+\epsilon}}\frac{A(n,m)}{(m^2n)^{1/2}}\left(\Delta T\delta_{n,1}+\sum_{\pm}\sum_{c\geq 1}\frac{1}{c}S(n,\pm 1;c)B^{\pm}\left(\frac{4\pi \sqrt{n}}{c}\right)\right),
$$
where $S(m,n;c)$ is a Kloosterman sum, and $B^{\pm}$ is roughly the integral of a Bessel function times some other simple factors along the interval $[T-\Delta,T+\Delta]$. The diagonal terms are easily bounded by $\Delta T^{1+\epsilon}$, which is more than enough.

For the term with the plus sign (the other case is treated similarly), after separating the variables $m$ and $n$ by using the Hecke relations, we are faced with the problem of estimating the sum
$$
N^{-1/2}\sum_{c\geq 1}\frac{1}{c}\sum_{n\sim N}A(n,1)S(n,1;c)B^{+}\left(\frac{4\pi \sqrt{n}}{c}\right),
$$
for $N\leq T^{3+\epsilon}$. Now, an application of the Voronoi summation formula leads roughly to the sum
\begin{equation}\label{sketch1}
\sum_{c\geq 1}c\sum_{n\geq 1}\frac{A(1,\tilde{n})}{\tilde{n}}e\left(\pm \frac{\tilde{n}}{c}\right)\mathcal{W}^{\pm}\left(\frac{N\tilde{n}}{c^3};\frac{\sqrt{N}}{c}\right),
\end{equation}
where
$$
\mathcal{W}^{\pm}(x;D)\approx x^{2/3}\int_{y\asymp 1}B^{+}\left(4\pi D\sqrt{y}\right)e\left(\pm(xy)^{1/3}\right)\frac{dy}{y^{1/3}}.
$$
Actually, once we use the Voronoi summation formula we are faced with a much more complicated exponential sum in place of the simple exponential factor $e\left(\pm\frac{\tilde{n}}{c}\right)$. A delicate study of the intervening sum was done by Blomer \citep{Blomer2012twisted}. Although elementary his argument is rather intricate and we are glad to directly quote his calculations here.

At this point we need a stationary phase analysis of the above integral. Here we use results of \citep{huang2016hybrid}, that were largely based on similar calculations from \citep{Young2014weyl}. The final outcome of this analysis is that $\mathcal{W}^{\pm}(x;D)$ is very small unless $D\gg T$, and in such an event, we have
$$
\mathcal{W}^{\pm}(x;D)\approx e\left(\mp\frac{x}{D^2}\right) \times \Delta x^{1/2}\int_{-T}^{T}\lambda(t)\left(\frac{x}{D^2}\right)^{it}dt,
$$
with $|\lambda(t)|\leq 1$. Once we apply these results to the sum \eqref{sketch1}, we arrive at (notice that the exponential factors cancel out!)
$$
\Delta \int_{-T}^{T}\left|\sum_{c\ll T^{1/2}}\sum_{n\ll T^{3/2}}\frac{A(1,n)}{(cn)^{1/2}}\left(\frac{n}{c}\right)^{it}\right|dt.
$$
The final touch is to use some form of the large sieve combined with the Cauchy-Schwarz inequality. By doing as such, we bound the above sum by
$$
\ll \Delta T^{\epsilon}\left(T^{3/2}+T\right)^{1/2}\left(T^{1/2}+T\right)^{1/2}\ll \Delta T^{5/4 +\epsilon},
$$
as we wanted.

Notice that the fact that the size of the variables $c$ and $n$ are very different is somehow responsible to worsening our estimate. If we could rearrange these two variables into two other variables both with size roughly $T$, we would get the bound $\Delta T^{1+\epsilon}$, which is the limit of the method. In our case it is not clear how to employ such a trick, but if we replaced our cusp form $\phi$ by a (maximal or minimal) parabolic Eisenstein series for $\rm{SL}(3,\Zz)$, this would be possible. In fact, the minimal parabolic case corresponds to a particular case of the result of Young \citep{Young2014weyl}.
\section*{Acknowledgements}

I would like to thank Etienne Fouvry, Philippe Michel and Ian Petrow for stimulating conversations on the topic of this paper.

\section{Preliminaries}

We begin by recalling some classical definitions and properties of Maass forms for $\rm{SL}(2,\Zz)$.

\subsection{Maass forms}

Let $\Hh$ be the classical upper half plane and let $\Delta=-y^2\left(\frac{\partial^2}{\partial x^2}+\frac{\partial^2}{\partial y^2}\right)$. Consider the spectral decomposition
$$
L^2(\rm{SL}(2,\Zz)\backslash \Hh)=\Cc\oplus\mathcal{C}\oplus \mathcal{E}.
$$
Here, $\Cc$ is identified with the space of constant functions on $\Hh$, $\mathcal{C}$ is the space of cusp forms and $\mathcal{E}$ is the space of Eisenstein series.

We recall that $\{f_j\}_{j\geq 1}=\mathcal{B}$ is a basis of Hecke-Maass cusp forms for $\mathcal{C}$. Each $f_j\in \mathcal{B}$ has a Fourier decomposition
$$
f_j(z)=2y^{1/2}\sum_{n\neq 0}\rho_j(n)\sqrt{|n|}K_{it_j}(2\pi|n|y)e(nx),
$$
where $K_s$ is the classical $K$-Bessel function.

We can assume that all the $f_j$ are eigenfunctions of all the Hecke operators with eigenvalues given by $\lambda_j(n)$. So that we have the formula
$$
\rho_j(\pm n)=\rho_j(\pm1)\lambda_j(n)n^{-1/2}.
$$
The functions in $\mathcal{C}$ can be further split in even and odd Maass forms according to whether $f_j(-{\bar z})=f_j(z)$ or $f_j(-{\bar z})=-f_j(z)$. 
Finally, we define the spectral weights
\begin{equation}\label{omegaj}
\omega_j=\frac{4\pi}{\cosh(\pi t_j)}|\rho_j(1)|^2.
\end{equation}
\subsection{Eisenstein series}

The Eisenstein series $E(z,s)$ has a Fourier decomposition as follows:
\begin{equation}
E(z,s)=y^{s}+\eta(s)y^{s}+2y^{1/2}\sum_{n\neq 0}\eta(n,s)\sqrt{|n|}K_{s-1/2}(2\pi|n|y)e(nx).
\end{equation}
The Fourier coefficients can be given explicitly \citep[p. 1187-1188]{CI2000} by
$$
\eta(s)=\pi^{1/2}\frac{\Gamma(s-1/2)}{\Gamma(s)}\frac{\zeta(2s-1)}{\zeta(2s)},
$$
and
$$
\eta(n,s)=\pi^{s}\Gamma(s)^{-1}\zeta(2s)^{-1}|n|^{-1/2}\sigma_{s-1/2}(|n|),
$$
where for $n>0$, $s\in\Cc$, 
$$
\sigma_s(n)=\sum_{ad=n}\left(\frac{a}{d}\right)^s.
$$
We also define the spectral weights
\begin{equation}\label{omegat}
\omega(t)=\frac{4\pi}{\cosh(\pi t)}\left|\eta(1,1/2+it)\right|^2.
\end{equation}
\subsection{Kuznetsov formula}
In what follows we recall the Kuznetsov formula which is a key ingredient of our proof. The next lemma is exactly \citep[Eq. (3.17)]{CI2000}.
\begin{lem}\label{Kuznetsov}
For $m,n\geq 1$, $(mn,q)=1$ and any even test function $h$ satisfying the following conditions
\begin{enumerate}[(i)]
    \item $h$ is holomorphic in $|\operatorname{Im}(t)|\leq \frac{1}{2}+\epsilon$,
    \item $h(t)\ll (1+|t|)^{-2-\epsilon}$ in the above strip,
\end{enumerate}
we have the following identitity: 
\begin{multline*}
\sideset{}{^{\prime}}\sum_{f_j\in\mathcal{B}}h(t_j)\omega_j\lambda_j(m)\lambda_j(n)+\frac{1}{4\pi}\int_{-\infty}^{+\infty}h(t)\omega(t)\sigma_{it}(m)\sigma_{it}(n)dt\\
=\frac{1}{2}\delta_{m,n}D + \frac{1}{2}\sum_{\pm}\sum_{c\equiv0\rmod c}\frac{S(m,\pm n;c)}{c}B^{\pm}\left(\frac{4\pi\sqrt{mn}}{c}\right),
\end{multline*}
where $\mathcal{B}$ is a basis of Hecke-Maass forms, and $\sideset{}{^{\prime}}\sum$ means that we only sum over the even forms, $\delta_{m,n}$ is the Kronecker symbol, and

\begin{equation}\label{DBB}
\begin{cases}
D=\frac{2}{\pi}\displaystyle\int_{0}^{+\infty}h(t)\tanh(\pi t)tdt,\\
B^+(x)=2i\displaystyle\int_{-\infty}^{+\infty}J_{2it}(x)\frac{h(t)t}{\cosh(\pi t)}dt,\\
B^-(x)=\frac{4}{\pi}\displaystyle\int_{-\infty}^{+\infty}K_{2it}(x)\sinh(t)h(t)tdt,
\end{cases}
\end{equation}
where $J_{\nu}$ and $K_{\nu}$ are the standard $J$ and $K$ Bessel functions respectively.

\end{lem}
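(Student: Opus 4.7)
The plan is to follow the classical Bruggeman--Kuznetsov strategy (see e.g.\ Iwaniec's \emph{Spectral Methods of Automorphic Forms}), which is the source of the exact statement quoted in \citep[Eq.~(3.17)]{CI2000}. The starting point is the family of Poincar\'e series
$$
P_m(z,s) = \sum_{\gamma \in \Gamma_\infty \backslash \mathrm{SL}(2,\Zz)} (\operatorname{Im} \gamma z)^{s}\, e(m\, \gamma z),
$$
for $\operatorname{Re}(s) > 1$ and an integer $m \geq 1$, and I would compute the Petersson inner product $\langle P_m(\cdot,s_1),\, P_n(\cdot,s_2)\rangle$ in two independent ways.

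First, I would expand geometrically: the Bruhat decomposition of $\mathrm{SL}(2,\Zz)$ splits the sum defining the Poincar\'e series into an identity term (contributing the diagonal $\delta_{m,n}$) and a sum over lower-triangular cosets indexed by moduli $c \geq 1$. Unfolding against $P_n$ and carrying out the $x$-integral produces the Kloosterman sums $S(m,n;c)$ twisted by a specific Bessel-type integral transform of the test weight. Second, I would expand spectrally, decomposing $P_m$ along $\Cc \oplus \mathcal{C} \oplus \mathcal{E}$. Using the Fourier expansions of $f_j$ and $E(\cdot,1/2+it)$ recalled in the Preliminaries, the $n$-th Fourier coefficient of $P_m$ unfolds to an expression in terms of $\rho_j(m)\overline{\rho_j(n)}$ and $\eta(m,1/2+it)\overline{\eta(n,1/2+it)}$, each multiplied by an explicit Mellin--Bessel factor coming from the integral of $y^s K_{it}(2\pi m y) K_{it}(2\pi n y)$. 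Equating the two expressions yields the ``pre-Kuznetsov'' formula, in which the spectral weight has a very specific shape, depending on $s_1,s_2$.

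The next step is the \emph{Bessel inversion}, which transforms this identity with a specific weight into the stated identity valid for an arbitrary even $h$ satisfying hypotheses (i) and (ii). Concretely, one notes that the family of spectral weights obtained as $(s_1,s_2)$ vary spans (by linear combinations and analytic continuation in $s_1,s_2$) a dense subspace of the space of functions on which the transforms $h\mapsto D$, $h \mapsto B^{\pm}$ are defined; the decay condition (ii) ensures absolute convergence of the Kloosterman side, while the holomorphicity in $|\operatorname{Im}(t)|\leq 1/2+\epsilon$ allows one to shift contours past the poles contributed by the Bessel transform. The signs $\pm$ correspond to the standard splitting of the $\mathrm{SL}(2,\Zz)$-Kloosterman sum into $J$-Bessel and $K$-Bessel pieces, yielding the two kernels $B^{\pm}$ as defined in \eqref{DBB}.

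The only delicate points are the justification of the absolute convergence and interchange of summation and integration on both sides, and the analytic continuation of the pre-trace identity to the range where the Bessel inversion is carried out. Both are handled by the hypotheses (i) and (ii), and are entirely standard. Since the statement is used verbatim from \citep[Eq.~(3.17)]{CI2000}, I would simply cite that reference rather than reproducing the technical details.
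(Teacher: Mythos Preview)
Your proposal is correct and in fact goes further than the paper does: the paper offers no proof at all for this lemma, simply introducing it as ``exactly \citep[Eq.~(3.17)]{CI2000}'' and moving on. Your sketch of the Bruggeman--Kuznetsov argument via Poincar\'e series is the standard route and is accurate, but since both you and the paper ultimately defer to the citation, the approaches coincide.
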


\subsection{$\rm{GL}(3)$ Maass forms and $L$-functions}\label{lfunctions}

Let $\phi$ be a Hecke-Maass form for $\rm{SL}(3,\Zz)$ of type $(\nu_1,\nu_2)$ whose Hecke eigenvalues are $A(n,m)$. We refer the reader to Goldfeld's book \citep{Goldfeld-GLn} for a precise definition and further information about these forms. In this paper we only use the $L$-functions associated to these forms, so our attention will be focused on them.

We consider the $L$-function associated to $\psi$, given by
$$
L(s,\phi):=\sum_{n\geq 1}\frac{A(n,1)}{n^s},
$$
and for a $\rm{GL}(2)$ Hecke-Maass cusp form $f$ with Hecke eigenvalues $\lambda(n)$, we consider the $L$-function of the Rankin-Selberg convolution of $\phi$ and $f$, \textit{i.e.}
\begin{equation}\label{RSL-function}
L(s,\phi\times f)=\sum_{n\geq 1}\frac{A(n,m)\lambda(n)}{(m^2n)^s}.
\end{equation}

\subsubsection{On the coefficients $A(n,m)$}

At some point in our proof, it will be advantageous to separate the variables $m$ and $n$ in $A(n,m)$ and this will be done by means of the Hecke relations. The following is obtained by applying M\"obius inversion to \citep[Theorem 6.4.11]{Goldfeld-GLn}:

\begin{equation}\label{Hecke}
A(n,m)=\sum_{d\mid (m,n)}\mu(d)A\left(\frac{n}{d},1\right)A\left(1,\frac{m}{d}\right).
\end{equation}

We shall also need estimate for the coefficients $A(n,m)$. For this we restrict ourselves to the case where $\phi$ is a symmetric square lift of a $\rm{GL}(2)$ Hecke Maass form. In this case we have both the pointwise bound
$$
A(n,m)\ll(mn)^{7/32+\epsilon},
$$
and the average result
\begin{equation}\label{RS}
\sum_{m\leq X}A(1,m)^2\ll X.
\end{equation}
The first one is a consequence of the fact that $\phi$ comes from the symmetric square lift of $\rm{GL}(2)$ form and the Kim-Sarnak bound, while the second comes from the Rankin-Selberg theory. Combining these estimates with the multiplicativity property of the $A(n,m)$, i.e.
$$
A(n_1n_2,m_1m_2)=A(n_1,m_1)A(n_2,m_2)\text{,  if }(m_1n_1,m_2n_2)=1,
$$
we obtain the upper bound:
\begin{equation}\label{Rankin-Selberg}
\sum_{m\leq X}|A(a,bm)|^2\ll X(ab)^{\frac{7}{16}+\epsilon}.
\end{equation}

\subsubsection{Approximate functional equation}

It is common knowledge among specialists that the value of an $L$- function on the critical line can be expressed by an essentially finite sum. The next two lemmas are a consequence of \citep[Theorem 5.3]{IK-analytic}.
In the following we consider the Langlands parameters
$$
\alpha_1=-\nu_1-2\nu_2+1,\;\;\alpha_2=-\nu_1+\nu_2,\;\;\alpha_3=-2\nu_1+\nu_2-1.
$$
\begin{lem}\label{AFE}
We have the following identities:

$$
L(1/2,\phi\times f_j\times\chi)=2\displaystyle\sum_{m=1}^{+\infty}\displaystyle\sum_{n=1}^{+\infty}\frac{A(n,m)\lambda_j(n)\chi(n)}{(m^2n)^{1/2}}V_{t_j}\left(\frac{m^2n}{q^2}\right),
$$
and
$$
\left|L(1/2+it,\phi\times\chi)\right|^2=2\displaystyle\sum_{m=1}^{+\infty}\displaystyle\sum_{n=1}^{+\infty}\frac{A(n,m)\sigma_{it}(n)\chi(n)}{(m^2n)^{1/2}}V_{t}\left(\frac{m^2n}{q^2}\right),
$$
where
$$
V_t(y)=\frac{1}{2\pi i}\int_{(3)}(\pi^2 y)^{-u}\prod_{\pm}\prod_{i=1}^{3}\frac{\Gamma\left(\frac{1/2\pm it+u-\alpha_i}{2}\right)}{\Gamma\left(\frac{1/2\pm it-\alpha_i}{2}\right)}e^{u^2}\frac{du}{u}.
$$
\end{lem}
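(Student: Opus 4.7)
The plan is to apply the standard contour integration machinery of \citep[Theorem 5.3]{IK-analytic} to the two degree-six $L$-functions appearing in the statement. The two identities will be established by essentially identical arguments, so I would present the first in detail and then just indicate the parallel for the second.

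First I would consider the integral
\begin{equation*}
I := \frac{1}{2\pi i}\int_{(3)} L\!\left(\tfrac{1}{2}+u,\phi\times f_j\times\chi\right) G_{t_j}(u)\,\left(\frac{q^{2}}{\pi^{2}}\right)^{u}e^{u^{2}}\frac{du}{u},
\end{equation*}
where $G_t(u)$ denotes the ratio of the archimedean gamma factors of the completed $L$-function $\Lambda(s,\phi\times f_j\times\chi)$ evaluated at $\tfrac{1}{2}+u$ and at $\tfrac{1}{2}$. Expanding the Dirichlet series on the line $\Re u=3$ and interchanging sum with integral immediately produces the double sum on the right-hand side of the first identity, up to the factor of $2$. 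The second expression for $I$ is obtained by shifting the contour to $\Re u=-3$, collecting only the residue $L(\tfrac{1}{2},\phi\times f_j\times\chi)$ at the simple pole $u=0$; one then invokes the functional equation $\Lambda(1/2+u)=\varepsilon\,\Lambda(1/2-u)$ and performs the change of variable $u\mapsto -u$ in the shifted integral. Using the evenness of $e^{u^{2}}$ together with the sign picked up by $du/u$, a short manipulation identifies the shifted integral with $-\varepsilon I$. Combining the two evaluations of $I$ yields $(1+\varepsilon)I=L(\tfrac{1}{2},\phi\times f_j\times\chi)$, which under the hypothesis $\varepsilon=+1$ is precisely the claimed formula.

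For the second identity I would run the very same argument on the degree-six product $L(s+it,\phi\times\chi)L(s-it,\phi\times\chi)$, evaluated at $s=\tfrac{1}{2}$. Its Dirichlet series is
\begin{equation*}
\sum_{m,n\ge 1}\frac{A(n,m)\sigma_{it}(n)\chi(n)}{(m^{2}n)^{s}},
\end{equation*}
a consequence of the factorization $L(s,\phi\otimes E_t)=L(s+it,\phi)L(s-it,\phi)$ combined with the Hecke relations \eqref{Hecke}, and its archimedean factor is obtained from that of the first case by substituting $\pm it$ for $\pm it_j$. The same contour shift then produces the identity.

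The step I expect to require genuine care is the verification that the root number equals $+1$ in both cases. For this one uses the self-duality of the symmetric-square lift $\phi$, the reality of the quadratic character $\chi$, and, in the first identity, the restriction of the spectral sum to \emph{even} Maass forms $f_{j}$, which kills the archimedean sign coming from the parity of $f_{j}$. Once $\varepsilon=+1$ has been confirmed, the remainder is routine manipulation of contour integrals, which I would not grind through in full.
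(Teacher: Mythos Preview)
Your proposal is correct and follows exactly the route the paper takes: the paper does not give a proof but simply states that the lemma is ``a consequence of \citep[Theorem 5.3]{IK-analytic}'', and what you have written is precisely the contour-shift argument underlying that theorem, specialized to the two degree-six $L$-functions at hand. Your discussion of why the root number is $+1$ (self-duality of $\phi$, quadratic $\chi$, and the restriction to even $f_j$) is the one nontrivial point the paper leaves implicit, and you have identified it correctly.
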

The next lemma shows that all the sums in Lemma \ref{AFE} are essentially bounded. It also describes explicitly the dependency of $V_t$ on the variable $t$.

\begin{lem}\label{truncation}
\begin{enumerate}[(i)]
\item For $k\geq 0$
$$
y^kV_t^{(k)}(y)\ll \left(1+\frac{y}{(1+|t|)^3}\right)^{-A},
$$
and
$$
y^kV_t^{(k)}(y)=\delta_k+O\left(\left(\frac{y}{(1+|t|)^3}\right)^{\alpha}\right),
$$
for any $0<\alpha\leq \frac{1}{3}\displaystyle\min_{1\leq i\leq 3}(1/2-|\Re(\alpha_i)|)$, where $\delta_0=1$ and $\delta_k=0$ otherwise.

\item
For $1<L\ll T^{\epsilon}$, $\epsilon>0$, and $|t-T|\ll T^{1-2\epsilon}$, we have the following approximation
\begin{multline*}
V_t(y)=\sum_{k=0}^{K/2}\sum_{\ell=0}^{K/2}t^{-2k}\left(\frac{t^2-T^2}{T^2}\right)^{\ell}V_{k,\ell}\left(\frac{y}{T^3}\right)+ O(y^{-\epsilon}(1+|T|)^{\epsilon}e^{-L})\\
+O\left(\left(\frac{1+|t-T|}{T}\right)^{K+1}\left(1+\frac{y}{T^3}\right)^{-A}\right),
\end{multline*}
where
\begin{equation}\label{Vkl}
V_{k,\ell}(y)=\frac{1}{2\pi i}\int_{\epsilon-iL}^{\epsilon+iL}P_{k.\ell}(u)(2\pi)^{-3u}y^{-u}e^{u^2}\frac{du}{u}.
\end{equation}
\end{enumerate}
\end{lem}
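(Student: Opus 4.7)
My plan is to prove both parts directly from the Mellin-Barnes representation of $V_t(y)$ by shifting contours and invoking Stirling's formula for the six Gamma ratios. Throughout, the Gaussian factor $e^{u^2}$ provides rapid decay in the imaginary direction, absorbing any polynomial-in-$u$ factors introduced along the way.

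For part (i), to obtain rapid decay when $y/(1+|t|)^3$ is large, I would shift the contour from $\Re(u)=3$ to $\Re(u)=A$ for any $A>0$. No residues are picked up, since the pole of $1/u$ is at $u=0$ and the Gamma factors in the numerator remain regular in the region crossed. By Stirling, each pair
\[
\frac{\Gamma\bigl(\tfrac{1/2+it+u-\alpha_i}{2}\bigr)\Gamma\bigl(\tfrac{1/2-it+u-\alpha_i}{2}\bigr)}{\Gamma\bigl(\tfrac{1/2+it-\alpha_i}{2}\bigr)\Gamma\bigl(\tfrac{1/2-it-\alpha_i}{2}\bigr)}
\]
equals $((1+|t|)/2)^{u}$ up to lower-order corrections, so the product of three such pairs contributes $(1+|t|)^{3A}$ which combines with $y^{-A}$ from $(\pi^2y)^{-u}$ to yield the decay $(y/(1+|t|)^3)^{-A}$. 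Derivatives in $y$ only bring down polynomial factors of $u$, absorbed by $e^{u^2}$. For the small-$y$ asymptotic, I instead shift to $\Re(u)=-\alpha$, picking up the residue $\delta_0$ at $u=0$; provided $\alpha\leq\tfrac{1}{3}\min_i(1/2-|\Re(\alpha_i)|)$, the poles of the Gamma factors are safely avoided and the shifted integral is bounded by $(y/(1+|t|)^3)^{\alpha}$.

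For part (ii), the task is to convert the $t$-dependence into an explicit expansion centered at $T$. I would apply the full Stirling expansion
\[
\log\Gamma(z)=\left(z-\tfrac12\right)\log z-z+\tfrac12\log(2\pi)+\sum_{j\geq 1}\frac{c_j}{z^{j}}+O(|z|^{-K-1})
\]
to all six Gamma factors and group them by $i$; the $\pm it$ contributions combine so that each pair becomes $(t/2)^{u}$ times an expansion with polynomial-in-$u$ coefficients that are analytic in $t^{-2}$. Multiplying the three pairs yields $(t/2)^{3u}\sum_{k\leq K/2}t^{-2k}Q_k(u)$ with Stirling remainder of size $(1+|t|)^{-K-1}$ (polynomial in $u$). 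To convert $(t/2)^{3u}(\pi^2 y)^{-u}$ into a factor depending only on $y/T^3$, I Taylor-expand $(t/T)^{3u}=(1+\eta)^{3u/2}$ in $\eta=(t^2-T^2)/T^2$ up to order $\ell\leq K/2$, and similarly expand each $t^{-2k}$; since $|\eta|\asymp|t-T|/T$ in the range $|t-T|\ll T^{1-2\epsilon}$, the truncation contributes the error $((1+|t-T|)/T)^{K+1}(1+y/T^3)^{-A}$ after re-shifting in $u$ as in part (i) to control large $y$. Finally, truncating the contour to $|\Im(u)|\leq L$ costs only $O(e^{-L})$ thanks to $e^{u^2}$ (this also explains the $y^{-\epsilon}(1+|T|)^{\epsilon}e^{-L}$ error, obtained by a slight shift of line), and the surviving integrals assemble into $V_{k,\ell}(y/T^3)$ with $P_{k,\ell}(u)$ the polynomial produced by the double expansion.

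The main obstacle is bookkeeping. One must check that the coefficients $P_{k,\ell}(u)$ are genuine polynomials of bounded degree (so that truncating to $|\Im(u)|\leq L$ is harmless and they are dominated by $e^{u^2}$), and that the combined Stirling-plus-Taylor remainder truly has the claimed uniform size, with no unexpected logarithmic losses from expanding $(1+\eta)^{3u/2}$ when $u$ itself ranges to height $L$. Once this is controlled, the two nested expansions telescope cleanly into the stated form.
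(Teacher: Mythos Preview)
The paper does not prove this lemma at all; its entire proof reads ``For a proof, one can check \citep[Lemma 2.2]{huang2016hybrid}.'' Your outline is exactly the standard argument that lies behind that citation: contour shifts combined with Stirling to get part (i), and for part (ii) the full Stirling expansion of the six Gamma ratios (yielding $(t/2)^{3u}$ times a series in $t^{-2}$ with polynomial-in-$u$ coefficients), followed by a Taylor expansion of $(t/T)^{3u}=(1+\eta)^{3u/2}$ in $\eta=(t^2-T^2)/T^2$, and finally truncation of the $u$-contour to height $L$ using the Gaussian $e^{u^2}$.

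One small correction: you should \emph{not} further expand the factors $t^{-2k}$ in $\eta$. They appear explicitly in the stated decomposition, so once Stirling has produced them they are simply carried through; the only $\eta$-expansion is that of $(t/T)^{3u}$, which is what generates the $\ell$-sum and the polynomials $P_{k,\ell}(u)$. With that adjustment your bookkeeping concern about $P_{k,\ell}$ being genuine polynomials of bounded degree is immediate (they are binomial-times-Stirling coefficients), and the rest of your argument goes through as written.
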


\begin{proof}
For a proof, one can check \citep[Lemma 2.2]{huang2016hybrid}.
\end{proof}

We shall use this result with $U=(\log T)^2$, so that the first error term is $O_{A}(T^{-A})$ for every $A>0$.

\begin{rmk}
Thanks to the works of Luo-Rudnick-Sarnak \citep{LRS99} on the Ramanujan conjecture, one can take any $\alpha\leq \frac{1}{10}$ in the lemma above.
\end{rmk}

\subsection{Voronoi summation formula}

In the next lemma we recall the Voronoi summation formula for $\rm{SL}(3,\Zz)$-Maass forms. The Voronoi formula is a generalization of the classical Poisson summation formula and is in a certain sense equivalent to the functional equation for $L(s,\phi\times\chi)$ where $\chi$ is a multiplicative character. The first version of this formula was obtained by Miller and Schmid \citep{MS2006automorphic}. The version we give here is \citep[Lemma 3]{Blomer2012twisted}

\begin{lem}\label{voronoi}

Let $w:(0,+\infty)\rightarrow\Cc$ be a smooth function with compact support. Let $\widehat{w}(s)$ denote its Melin transform and let
$$
G^{\pm}(x):=\prod_{i=1}^{3}\frac{\Gamma\left(\frac{s+\alpha_i}{2}\right)}{\Gamma\left(\frac{1-s-\alpha_i}{2}\right)}\pm \frac{1}{i}\prod_{i=1}^{3}\frac{\Gamma\left(\frac{1+s+\alpha_i}{2}\right)}{\Gamma\left(\frac{2-s-\alpha_i}{2}\right)}.
$$
Then we have
\begin{multline}
\sum_{n=1}^{+\infty}A(n,m)e\left(\frac{{\bar d}n}{c}\right)w(n)\\
=\frac{\pi^{3/2}}{2}c\sum_{\pm}\sum_{n_1\mid cm}\sum_{n_2=1}^{+\infty}\frac{A(n_1,n_2)}{n_1n_2}S\left(dm,\pm n_2;\frac{mc}{n_1}\right)\mathcal{W}\left(\frac{n_1^2n_2}{c^3m}\right),
\end{multline}
where
\begin{equation}\label{Wtransform}
\mathcal{W}^{\pm}(x)=\frac{x}{2\pi i}\int_{(3)}G^{\pm}(s)\widehat{w}(y)dy.
\end{equation}
\end{lem}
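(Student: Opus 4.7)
The plan is to derive Lemma \ref{voronoi} along the classical route of Miller--Schmid: represent the sum as a contour integral via Mellin inversion, apply the functional equation for the additively twisted $\mathrm{GL}(3)$ $L$-function, and recognise the dual side as the claimed expression. Since the statement above is quoted verbatim from Blomer, the purpose of the proof plan is to indicate why each ingredient on the right-hand side appears.

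For $w\in C_c^{\infty}((0,\infty))$ we have $w(n)=\frac{1}{2\pi i}\int_{(3)}\widehat{w}(s)n^{-s}\,ds$. Substituting and exchanging summation with integration, the left-hand side becomes
$$
\frac{1}{2\pi i}\int_{(3)}\widehat{w}(s)\,L_m(s;\bar d/c)\,ds,\qquad L_m(s;\bar d/c):=\sum_{n\geq 1}\frac{A(n,m)}{n^{s}}e\!\left(\frac{\bar d n}{c}\right),
$$
which converges absolutely on $\Re s=3$ because $w$ has compact support away from $0$. The key automorphic input is that $L_m(s;\bar d/c)$ continues to an entire function of $s$ and satisfies a functional equation whose dual side is a weighted sum of Kloosterman sums of the shape
$$
\sum_{\pm}\sum_{n_1\mid cm}\sum_{n_2\geq 1}\frac{A(n_1,n_2)}{n_1n_2}S\!\left(dm,\pm n_2;\frac{mc}{n_1}\right)\left(\frac{n_1^{2}n_2}{c^{3}m}\right)^{s-1}
$$
times explicit archimedean gamma factors. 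The two products $G^{\pm}$ arise from the $\mathrm{GL}(3)$ local factors at $s$ and at $1-s$ via the reflection and duplication identities for $\Gamma$, while the divisibility condition $n_1\mid cm$ and the appearance of the Kloosterman sum $S(dm,\pm n_2;mc/n_1)$ come from unfolding the additive character $e(\bar d n/c)$ against the standard Whittaker--Bruhat decomposition on $\mathrm{GL}(3)$.

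Shifting the contour from $\Re s=3$ all the way to the left is legitimate since $\widehat{w}$ has rapid decay (as $w\in C_c^{\infty}$) and the $L$-series is entire. After exchanging integration with the now absolutely convergent dual sum, the remaining $s$-integral for each pair $(n_1,n_2)$ is precisely $\mathcal{W}^{\pm}\!\bigl(n_1^{2}n_2/(c^{3}m)\bigr)$ as defined in \eqref{Wtransform}; the factor $c$ and the constant $\pi^{3/2}/2$ out front are collected from the functional equation. The main obstacle is not the contour bookkeeping but the additively twisted functional equation itself: this is where the automorphic content sits, and it is exactly the theorem of Miller--Schmid \citep{MS2006automorphic} that Blomer packaged into the compact form quoted here. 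For the present paper it is therefore legitimate and cleanest to invoke \citep[Lemma 3]{Blomer2012twisted} directly.
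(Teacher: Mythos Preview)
Your proposal is correct and matches the paper's treatment: the paper does not prove this lemma at all but simply quotes it as \citep[Lemma~3]{Blomer2012twisted}, having noted that the formula goes back to Miller--Schmid \citep{MS2006automorphic}. Your sketch of the Mellin-inversion/functional-equation route is the standard derivation underlying that reference, and your final recommendation to invoke Blomer's lemma directly is exactly what the paper does.
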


\section{Initial steps for Theorem \ref{GL3}}

We want to prove

$$
\sum_{\substack{f_j\in\mathcal{B}\\T-\Delta\leq t_j\leq T+\Delta}}L\left(1/2,\phi\times f_j\right)+\frac{1}{4\pi}\int_{T-\Delta}^{T+\Delta}\left|L\left(1/2+it,\phi\right)\right|^2dt\ll \Delta T^{5/4+\epsilon}.
$$

We shall consider the spectrally normalized forms

$$
\mathcal{M}:=\sideset{}{{}^{\prime}}\sum_{\substack{f_j\in\mathcal{B}\\T-\Delta\leq t_j\leq T+\Delta}}\omega_jL\left(\frac{1}{2},\phi\times f_j\right)+\frac{1}{4\pi}\int_{T-\Delta}^{T+\Delta}\omega(t)\left|L\left(\frac{1}{2}+it,\phi\right)\right|^2dt,
$$
where $\omega_j$ and $\omega(t)$ are given by \eqref{omegaj} and \eqref{omegat} respectively. There is not such a big loss because we have the inequalities
$$
\omega_j\gg t_j^{-\epsilon},\,\,\omega(t)\gg t^{-\epsilon}.
$$
The first of these upper bounds is conteined in \citep[Theorem 8.3]{iwaniec-spectral} and the second one is a classical result.

Hence we need to prove
\begin{equation}\label{whatwillprove}
\mathcal{M}\ll \Delta T^{5/4+\epsilon}.
\end{equation}

We want to estimate $\mathcal{M}$ by means of the Kuznetsov formula. To do so, we need to consider a smooth variant of $\mathcal{M}$. Precisely, we let
\begin{equation}\label{h=}
h(t)=\frac{1}{\cosh\left(\frac{t-T}{\Delta}\right)}+\frac{1}{\cosh\left(\frac{t+T}{\Delta}\right)}.
\end{equation}
Here one could take other nice holomorphic even functions such that $h(t)\gg 1$ in the region $|t-T|\leq \Delta$ but we take this particular one so that we can directly quote results from \citep{Young2014weyl} and \citep{huang2016hybrid}. 
Since $h(t)\gg 1$ in $[T-\Delta,T+\Delta]$, it follows that
\begin{multline*}
\mathcal{M}\ll\sideset{}{{}^{\prime}}\sum_{f_j\in\mathcal{B}}h(t_j)\omega_jL\left(\frac{1}{2},\phi\times f_j\right)+\\
\frac{1}{4\pi}\int_{-\infty}^{+\infty}h(t)\omega(t)\left|L\left(\frac{1}{2}+it,\phi\right)\right|^2dt.
\end{multline*}
Applying the approximate functional equation (see Lemma \ref{AFE}), the right-hand side above equals

\begin{multline*}
\sideset{}{{}^{\prime}}\sum_{f_j\in\mathcal{B}}h(t_j)\omega_j\sum_{m=1}^{+\infty}\sum_{n=1}^{+\infty}\frac{A(n,m)\lambda_j(n)}{(m^2n)^{1/2}}V_{t_j}\left(m^2n\right)\\
+\frac{1}{4\pi}\int_{-\infty}^{+\infty}h(t)\omega(t)\sum_{m=1}^{+\infty}\sum_{n=1}^{+\infty}\frac{A(n,m)\sigma_{it}(n)}{(m^2n)^{1/2}}V_{t}\left(m^2n\right)dt,
\end{multline*}
By Lemma \ref{truncation}, there exists $K>0$ for which is enough to bound

\begin{multline}\label{Mdag}
\mathcal{M}^{\dag}:=\sum_{m=1}^{+\infty}\sum_{n=1}^{+\infty}\frac{A(n,m)}{(m^2n)^{1/2+u}}V\left(\frac{m^2n}{T^3}\right)\\
\times\left(\sideset{}{{}^{\prime}}\sum_{f_j\in\mathcal{B}}h_{k,\ell}(t_j)\omega_j\lambda_j(n)+\frac{1}{4\pi}\int_{-\infty}^{+\infty}h_{k,\ell}(t)\omega(t)\sigma_{it}(n)dt\right),
\end{multline}
uniformly in $u\in [\epsilon-i(\log T)^2,\epsilon+i(\log T)^2]$ and $0\leq k,\ell\leq K$, where $V=V_{k,\ell}$ is of the form \eqref{Vkl}, and
\begin{equation}\label{hkl}
h_{k,\ell}=t^{-2k}T^{-2\ell}(t^2-T^2)^{\ell}h(t).
\end{equation}
In the following we will give all the details only in the case where $k=\ell=0$. The other cases can be handled similarly and amount to smaller order terms.

We are now in a perfect position to use the Kuznetsov formula to the terms between parenthesis in \eqref{Mdag}. We thus obtain

\begin{multline*}
\mathcal{M}^{\dag}=\sum_{m=1}^{\infty}\sum_{n=1}^{\infty}\frac{A(n,m)\chi(n)}{(m^2n)^{1/2+u}}V\left(\frac{m^2n}{T^3}\right)\Bigg(\frac{1}{2}\delta_{n,1}D+\\
\frac{1}{2}\sum_{c\geq 1}\frac{1}{c}\sum_{\pm}S(n,\pm 1;c)B^{\pm}\left(\frac{4\pi\sqrt{n}}{c}\right)\Bigg),
\end{multline*}
where $D$ and $B^{\pm}(x)$ are as in \eqref{DBB}.

It follows from \eqref{h=} that $D\ll \Delta T^{1+\epsilon}$ and hence, by Cauchy-Schwarz and \eqref{RS}, we have
\begin{multline*}
\sum_{m=1}^{\infty}\sum_{n=1}^{\infty}\frac{A(n,m)}{(m^2n)^{1/2+u}}V\left(\frac{m^2n}{T^3}\right)\delta_{n,1}D\\
\ll \Delta T^{1+\epsilon}\sum_{m\ll T^{3/2+\epsilon}}\frac{1}{m}\ll \Delta T^{1+\epsilon},
\end{multline*}
which suffices for our purposes. Now we need to estimate the off-diagonal terms. Let
$$
S_{\sigma}:=\sum_{m=1}^{\infty}\sum_{n=1}^{\infty}\frac{A(n,m)}{(m^2n)^{1/2+u}}V\left(\frac{m^2n}{T^3}\right)\sum_{c\equiv 0\rmod q}\frac{1}{c}S(n,\sigma;c)B^{\sigma}\left(\frac{4\pi\sqrt{n}}{c}\right),
$$
for $\sigma=\pm$. In the following, we must separate the variables $m$ and $n$ in $A(n,m)$ by means of the Hecke relations (see \eqref{Hecke}). This together with a change of variables and a dyadic decomposition on the $n$ variable shows that
$$
S_{\sigma}\ll \underset{\delta^3m^2\leq T^{3+\epsilon}}{\sum\sum}\frac{|A(1,m)|}{\delta^{3/2}m}\displaystyle\sup_{N\ll \frac{T^{3+\epsilon}}{\delta^3m^2}}\frac{\left|S_{\sigma}(N;\delta)\right|}{N^{1/2}} + O(T^{-A}),
$$
where
\begin{equation}
S_{\sigma}(N;\delta)=\sum_{c=1}^{+\infty}\frac{1}{c}\sum_{n}A(n,1)S(\delta n,\sigma;c)w_{\sigma}\left(\frac{n}{N};\frac{\sqrt{\delta N}}{c}\right),
\end{equation}
where
$$
w_{\sigma}(y;D):=w(y)y^{-u}B^{\sigma}(4\pi D\sqrt{y}),
$$
for some smooth funcion $w$ with compact support.

We must prove the following

\begin{prop}\label{mustprove}
Let $S_{\sigma}(N;\delta)$ be as above. Then we have the inequality
\begin{equation*}
S_{\sigma}(N;\delta)\ll \Delta \delta^{1/2}N^{1/2}T^{5/4+\epsilon},
\end{equation*}
uniformly for $\delta^3N\ll T^{3+\epsilon}$
\end{prop}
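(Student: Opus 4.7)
The plan is to follow the four-step strategy sketched in the introduction: apply Voronoi summation to the $n$-variable in $S_\sigma(N;\delta)$, quote Blomer's arithmetic analysis to simplify the resulting exponential sum, carry out stationary-phase on the integral transform of the Bessel kernel, and finish with the large sieve after an application of Cauchy--Schwarz.

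First, I would apply Lemma \ref{voronoi} to the inner sum $\sum_n A(n,1)\,S(\delta n,\sigma;c)\,w_\sigma(n/N;\sqrt{\delta N}/c)$. Opening the Kloosterman sum and splitting into residue classes mod $c$ puts this in the form required by Voronoi, producing a dual sum over $n_1\mid c\delta$ and $n_2\geq 1$ involving $A(n_1,n_2)$, a secondary Kloosterman sum whose modulus divides $c\delta/n_1$, and the integral transform $\mathcal{W}^\pm$ of $w_\sigma$. The arithmetic of the resulting sum of Kloosterman-type terms over $c$ is genuinely intricate; rather than redo it, I would directly invoke Blomer's calculation in \citep{Blomer2012twisted}, which shows that, up to admissible error, the full arithmetic factor reduces to a clean phase $e(\pm \tilde n/c)$ in the regime of interest, with the divisor conditions absorbed into a restricted $(c,\tilde n)$ summation.

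Second, I would analyze $\mathcal{W}^\pm(x;D)$ with $D=\sqrt{\delta N}/c$ and $x=\delta N\tilde n/c^3$ (up to powers). Substituting the definition of $B^\pm$ from \eqref{DBB} and using a standard integral representation for the Bessel kernel reduces $\mathcal{W}^\pm$ to a double oscillatory integral in $(t,y)$; the stationary-phase work has already been done in Huang \citep{huang2016hybrid} (building on Young \citep{Young2014weyl}), and gives negligible contribution unless $D\gg T$ (equivalently $c\ll \sqrt{\delta N}/T$) and in that range
\begin{equation*}
\mathcal{W}^\pm(x;D)\;\approx\; e\!\left(\mp\tfrac{x}{D^2}\right)\,\Delta\, x^{1/2}\int_{-T}^{T}\lambda(t)\left(\tfrac{x}{D^2}\right)^{it}dt,
\end{equation*}
with $|\lambda(t)|\leq 1$. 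Since $x/D^2=\tilde n/c$, the stationary-phase phase $e(\mp x/D^2)$ cancels exactly the arithmetic phase $e(\pm\tilde n/c)$ produced in the previous step.

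Third, after this cancellation and collecting the normalizations (and plugging in the Hecke relation bound $A(n_1,n_2)\ll |A(1,n_2)|(n_1n_2)^\epsilon$ and the divisor constraint $n_1\mid c\delta$), the whole of $S_\sigma(N;\delta)$ is controlled by
\begin{equation*}
\Delta\cdot (\delta N)^{1/2}T^\epsilon \int_{-T}^{T}\Biggl|\sum_{c\ll \sqrt{\delta N}/T}\,\sum_{n\ll (\delta N)^{3/2}/\cdots}\frac{A(1,n)}{(cn)^{1/2}}\left(\frac{n}{c}\right)^{it}\Biggr|\,dt + O(T^{-A}),
\end{equation*}
where the effective ranges, after tracking the worst case $\delta^3 N\asymp T^{3+\epsilon}$, are $c\ll T^{1/2}$ and $n\ll T^{3/2}$. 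Applying Cauchy--Schwarz in $t$, separating the $c$- and $n$-Dirichlet polynomials, and invoking the mean-value theorem (large sieve) for Dirichlet polynomials together with the Rankin--Selberg estimate \eqref{RS}, one bounds the integral by $\Delta T^\epsilon(T^{3/2}+T)^{1/2}(T^{1/2}+T)^{1/2}\ll \Delta T^{5/4+\epsilon}$, which after renormalization yields the claimed bound for $S_\sigma(N;\delta)$.

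The two delicate steps, and the ones where I would simply import the cited literature rather than reprove anything, are Blomer's simplification of the post-Voronoi arithmetic sum and the stationary-phase asymptotics of $\mathcal{W}^\pm$. The mechanism that makes the whole thing work, and the point one must verify carefully, is the miraculous cancellation between the oscillation coming from stationary phase and the oscillation coming from the arithmetic sum: without this the inner integral would still exhibit a fast phase in $\tilde n/c$ and the large sieve would not deliver the Lindelöf-on-average bound.
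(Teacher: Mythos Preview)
Your proposal is essentially the paper's own argument, and the four-step outline (Voronoi, Blomer's arithmetic lemma, Huang--Young stationary phase, large sieve plus Rankin--Selberg) matches the paper step for step.

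One point deserves a caution. You write that Blomer's calculation reduces the post-Voronoi arithmetic sum to a \emph{clean} phase $e(\pm\tilde n/c)$ which then cancels completely against the stationary-phase factor. In the actual execution this is not quite what happens: after invoking \citep[Lemma~12]{Blomer2012twisted} and applying additive reciprocity, the main phase $e(\mp\sigma\,\delta_0 n_1^2 n_2/(\delta' c'))$ does cancel against the oscillation of $\mathcal{W}^\pm$, but a \emph{residual} exponential $e(\pm\sigma\,\overline{d_2'}\,\delta_0 f_2 (n_1')^2 n_2/(\delta' f_1))$ survives, with a small modulus $\delta' g/s$. Consequently the endgame is not the plain mean-value theorem for Dirichlet polynomials but the slightly sharper Lemma~\ref{largesieve} of the paper, where the leftover additive character is expanded into multiplicative characters via Gauss sums at a cost of $(\delta' g/s)^{1/2}$, which is harmless. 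This does not alter the final exponent, but you should be aware that the ``clean phase'' picture from the introduction is a deliberate simplification and the real bookkeeping involves a proliferation of auxiliary variables ($\delta_0,\delta',d_2',f_1,f_2,n_1',g,s,r$) before the large sieve can be applied.
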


\subsection{Applying the Voronoi formula}

The next step is to apply the Voronoi summation formula on the $n$ variable. Opening the Kloosterman sum and using Lemma \ref{voronoi}, we obtain

\begin{multline}
S_{\sigma}(N;\delta)=\frac{\pi^{3/2}}{2}\sum_{\pm}\sum_{c=1}^{+\infty}\frac{1}{\delta_0}\sum_{n_1 \mid c_1}\sum_{n_2=1}^{\infty}\frac{A(n_1,n_2)}{n_1n_2}\\
\times \mathcal{W}^{\pm}_{\sigma}\left(\frac{Nn_1^2n_2}{c_1^3};\frac{\sqrt{\delta N}}{c}\right)\mathcal{T}^{\pm,\sigma}_{\delta,n_1,n_2}(c),
\end{multline}
where $\delta_0=(\delta,c)$, $\delta'=\delta/\delta_0$, $c'=c/\delta_0$,
$$
\mathcal{T}^{\pm,\sigma}_{\delta,n_1,n_2}(c):=\sideset{}{^{\ast}}\sum_{d\rmod{c}}e\left(\frac{\sigma d}{c}\right)S\left(\overline{\delta'}d,\pm n_2;c'/n_1\right),
$$
and $\mathcal{W}^{\pm}_{\sigma}(x;D)$ is defined as in \eqref{Wtransform} with $w(.)$ replaced by $w_{\sigma}(.,D)$.\\

In the next sections we shall study the integral transform $\mathcal{W}_{\sigma}^{\pm}(.,.)$ and the exponential sum $\mathcal{T}^{\pm,\sigma}_{\delta,n_1,n_2}(c)$.

\section{Stationary phase method}

In this section we deduce a particularly nice formula for $\mathcal{W}^{\pm}$. Precisely, we show that after multiplying it by a suitable oscillating factor, we can write it roughly as an integral of length $T$ of an archimedean character, which will be perfect for the application of the large sieve later on.

The next lemma gives the asymptotic behavior of the integral transform $\mathcal{W}^{\pm}$. This can be found in some form in \citep{li2009central}, but here we use a slightly more precise form given by Blomer (see \citep[Lemma 6]{Blomer2012twisted}).

\begin{lem}\label{Besseldecomposition}
Let $K>0$. There exist constants $\gamma_{\ell}$ depending only on the Langlands parameters $\alpha_i$ of $\phi$ such that for any compactly supported function $w$ and $x\geq 1$, we have
$$
\mathcal{W}^{\pm}(x)=x\int_{0}^{+\infty}w(y)\sum_{j=1}^{K}\frac{\gamma_{\ell}}{(xy)^{\ell/3}}e\left(\pm (xy)^{1/3}\right)dy +O(x^{1-K/3}),
$$
where the implied constant depend only on the $\alpha_i$. $\|w\|_{\infty}$ and $K$.
\end{lem}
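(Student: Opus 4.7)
The approach is to apply Stirling's asymptotic formula to the Gamma ratios making up $G^\pm(s)$ and then analyze the resulting Mellin--Barnes integral by the method of stationary phase. This is the standard route taken by Li \citep{li2009central} and refined by Blomer \citep[Lemma 6]{Blomer2012twisted}, so I will only outline the main steps.

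First, I would unfold $\widehat{w}(s)=\int_0^\infty w(y)y^{s-1}\,dy$ and interchange the order of integration, which is legitimate because $w$ is smooth and compactly supported and the Gamma ratios have polynomial growth on vertical lines. This reduces the asymptotic analysis of $\mathcal{W}^\pm(x)$ to that of the one-variable kernel
$$K^\pm(z):=\frac{1}{2\pi i}\int_{(\sigma)}G^\pm(s)\,z^{-s}\,ds,\qquad z\to+\infty,$$
since $\mathcal{W}^\pm(x)=x\int_0^\infty w(y)\,K^\pm(xy)\,dy$.

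Next, on the line $\Re s=\sigma$ with $|\Im s|$ large, Stirling's formula applied to the six Gamma factors defining $G^\pm(s)$ produces an asymptotic expansion of the shape
$$G^\pm(s)=\Bigl(\tfrac{s}{2\pi e}\Bigr)^{3(s-1/2)}e^{i\varphi^\pm(s)}\Bigl(\gamma_0+\sum_{\ell=1}^{K-1}\gamma_\ell\,s^{-\ell}\Bigr)+O\bigl(|s|^{3\sigma-3/2-K}\bigr),$$
where $\varphi^\pm$ is affine in $s$ and the constants $\gamma_\ell$ depend only on the Langlands parameters $\alpha_i$. Substituting this into $K^\pm(z)$, shifting the contour to $\Re s=0$, and writing $s=it$, one obtains oscillatory integrals with phase $\Phi^\pm_z(t)=-t\log z+3t\log|t|-3t\pm\tfrac{\pi}{2}t+O(1)$. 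This phase has a unique stationary point at $t_0=\pm z^{1/3}$ and satisfies $\Phi^\pm_z(t_0)=\mp3z^{1/3}+O(1)$; the usual stationary-phase expansion around $t_0$ then yields the main term $e(\pm z^{1/3})$ multiplied by an asymptotic series in $z^{-\ell/3}$ with coefficients proportional to $\gamma_\ell$.

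Finally, the error $O(x^{1-K/3})$ has two sources: the remainder $|s|^{-K}$ in Stirling's truncation, which after the stationary-phase evaluation turns into $z^{-K/3}$ and therefore into $x^{1-K/3}$ after multiplying by the prefactor $x$ and integrating $w(y)$ over a compact set; and the contribution from $|t|$ far from $t_0$, which is absorbed into the same error thanks to the Schwartz decay of $\widehat{w}$ (a consequence of $w\in C_c^\infty$). The delicate part of this program is to carry out the stationary phase expansion uniformly in $y\in\operatorname{supp}(w)$ with explicit and sharp constants, and that is precisely what \citep[Lemma 6]{Blomer2012twisted} does; one therefore quotes that lemma directly, with only cosmetic notational changes.
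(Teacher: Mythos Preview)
Your proposal is correct and matches the paper's own treatment: the paper does not give an independent proof of this lemma but simply quotes \citep[Lemma 6]{Blomer2012twisted} (with a reference to \citep{li2009central} for the original form), which is exactly where you end up after your sketch. Your outline of the mechanism behind Blomer's lemma---Stirling on $G^\pm(s)$ followed by a saddle-point/stationary-phase analysis of the resulting Mellin--Barnes integral---is the standard one and is accurate in its broad strokes.
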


The next lemma combines Lemma \ref{Besseldecomposition} with stationary phase arguments due to Young (see \citep[Lemma 8.1]{Young2014weyl}). The details were carried out by Huang (see \citep[Lemma 4.4]{huang2016hybrid}).

\begin{lem}\label{integralrep}
Let $x\gg T^{-B}$ for some large but fixed $B$. Let
\begin{equation}\label{W-tilde}
\widetilde{\mathcal{W}}_{\sigma}^{\pm}(x;D):=e\left(\mp\sigma\frac{x}{D^2}\right)\mathcal{W}(x;D).
\end{equation}
We have $\widetilde{\mathcal{W}^{\pm}_{\sigma}}(x;D)\ll T^{-A}$, unless
\begin{align}
\begin{cases}
D\gg T\Delta^{1-\epsilon}, &\text{if }\sigma=1,\\
D\asymp T, &\text{if }\sigma=-1.\\
\end{cases}
\end{align}
If $x\gg T^{\epsilon}$, we have
\begin{equation}\label{Wtilde}
\widetilde{\mathcal{W}^{\pm}_{\sigma}}(x;D)=\Delta x^{5/6}\sum_{j=1}^{K}\frac{\gamma_{\ell}}{x^{\ell/3}}L_j(x;D) +O_A(T^{-A}),
\end{equation}
where $L_j$ is a function that takes the form
$$
L_j(x;D)=\int_{|t|\ll U}\lambda_{X,D}(t)\left(\frac{x}{D^2}\right)^{it}dt,
$$
with the following parameters. Here $\lambda_{X,T}(t)\ll 1$ does not depend on $x$ or $D$. If $\sigma=1$, then $U=T^2/D$ and $L_j$ vanishes unless
$$
X\asymp D^3,\text{   and   }D\gg \Delta T^{1-\epsilon}.
$$
If $\sigma=-1$, then $U=T^{2/3}X^{1/3}D^{-2/3}$ and $L_j$ vanishes unless
$$
X\ll D^3\Delta^{-3+\epsilon},\text{   and   }D\asymp T.
$$
\end{lem}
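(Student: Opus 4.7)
The plan is to combine Lemma~\ref{Besseldecomposition} with an iterated stationary-phase analysis, in the spirit of Young and of Huang. Inserting Lemma~\ref{Besseldecomposition} with $w$ replaced by $w_\sigma(\cdot;D)$, I obtain
\[
\mathcal{W}_\sigma^{\pm}(x;D)=x\sum_{\ell=1}^{K}\gamma_\ell x^{-\ell/3}\int_0^{\infty} w(y)\,y^{-u-\ell/3}\,B^{\sigma}\!\left(4\pi D\sqrt{y}\right)e\!\left(\pm(xy)^{1/3}\right)dy+O(x^{1-K/3}),
\]
where $K$ can be taken arbitrarily large because $x\gg T^{-B}$. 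I would then expand $B^{\sigma}$ using \eqref{DBB}: for $\sigma=+1$ this inserts $J_{2it}(4\pi D\sqrt{y})$ against $h(t)t/\cosh(\pi t)$, and for $\sigma=-1$ it inserts $K_{2it}(4\pi D\sqrt{y})$ against $\sinh(t)h(t)t$. The weight $h$ in \eqref{h=} localizes $t$ with $|t|\asymp T$ up to width $O(\Delta\log T)$.

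Next I would invoke the standard uniform asymptotic expansions of the Bessel functions. In the $\sigma=-1$ case, $K_{2it}(z)$ is exponentially small unless $z\gtrsim t$, forcing $D\asymp T$; after substituting the Debye-type asymptotic, the phase in $y$ together with $\pm(xy)^{1/3}$ can be analyzed by stationary phase. In the $\sigma=+1$ case, $J_{2it}(z)$ admits an oscillatory transitional/Debye expansion whose phase, combined with $\pm(xy)^{1/3}$, has a single stationary point $y_0=y_0(t;x,D)$ in the support of $w$ only when $D\gg T\Delta^{1-\epsilon}$ and $x\asymp D^3$. Evaluating at $y_0$, the stationary phase is of the form
\[
\mp\sigma\frac{x}{D^2}+t\log\!\left(\frac{x}{D^2}\right)+\text{lower order},
\]
so that multiplication by $e(\mp\sigma x/D^2)$ in \eqref{W-tilde} cancels the $t$-independent piece, leaving precisely the archimedean character $(x/D^2)^{it}$ in the integrand.

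The remaining step is bookkeeping: collecting the Jacobian and smooth non-phase factors into a bounded amplitude $\lambda_{X,D}(t)$ independent of $x,D$, then reading off from the location of $y_0$ and the range where stationarity is admissible the effective length $U$ of the $t$-integral and the support ranges for $X$ and $D$. This gives $X\asymp D^3$ and $U=T^2/D$ in the case $\sigma=+1$, and $X\ll D^3\Delta^{-3+\epsilon}$ with $U=T^{2/3}X^{1/3}D^{-2/3}$ in the case $\sigma=-1$.

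The main technical obstacle is the double stationary-phase analysis (first in $y$, then implicit in $t$), together with verifying that after multiplying by $e(\mp\sigma x/D^2)$ the residual $t$-phase is \emph{exactly} $t\log(x/D^2)$ with an amplitude independent of $x$ and $D$. This requires a careful cubic expansion of the combined phase and control of sub-leading contributions. Since exactly this computation has been carried out in \citep[Lemma~8.1]{Young2014weyl} and reused for $\mathrm{GL}(3)$ in \citep[Lemma~4.4]{huang2016hybrid}, I would quote those results directly rather than reproduce the full calculation.
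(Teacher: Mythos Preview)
Your proposal is correct and matches the paper's approach exactly: the paper does not give an independent proof of this lemma, but simply states that it ``combines Lemma~\ref{Besseldecomposition} with stationary phase arguments due to Young (see \citep[Lemma~8.1]{Young2014weyl}),'' with the details ``carried out by Huang (see \citep[Lemma~4.4]{huang2016hybrid}).'' Your sketch of how those two ingredients fit together---inserting the asymptotic expansion of $\mathcal{W}^{\pm}$, expanding $B^{\sigma}$ via the Bessel integrals, and then performing the stationary-phase analysis that produces the factor $e(\mp\sigma x/D^2)(x/D^2)^{it}$---is accurate and in fact more detailed than what the paper itself provides.
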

\section{Treatment of the exponential sum $\mathcal{T}^{\pm,\sigma}_{\delta,n_1,n_2}(c)$}

Before we use the results of the last section, we must first write a formula for $\mathcal{T}^{\pm,\sigma}_{\delta,n_1,n_2}(c)$ that has the dependence in $n_2$ in a rather nice explicit way. In particular, we will find the term $e\left(\mp\sigma\frac{\delta_0n_1^2n_2}{\delta'c'}\right)$ necessary to form $\widetilde{\mathcal{W}^{\pm}}(\ldots)$ (see \eqref{Wtilde}).

This is a very delicate calculation and while doing it many new variables will arise. But almost all of them play no important role in the argument.

We remark that our definition of $\mathcal{T}^{\pm,\sigma}_{\delta,n_1,n_2}(c)$ can be seen as a special case of $\mathcal{T}^{\pm,\sigma}_{\delta,c_1,n_1,n_2}(c,q)$ in \citep[p. 1407]{Blomer2012twisted}. Indeed we have
$$
\mathcal{T}^{\pm,\sigma}_{\delta,n_1,n_2}(c)=\frac{1}{c}\mathcal{T}^{\pm,\sigma}_{\delta,\frac{c}{(\delta.c)},n_1,n_2}(c,1).
$$
Thus, \citep[Lemma 12]{Blomer2012twisted} becomes

\begin{lem}
Let $\delta_0=(\delta,c)$, $\delta'=\delta/\delta_0$ and $c'=c/\delta_0$.

If $(\delta_0,c')=1$, then

\begin{multline}\label{exp-lem-blomer}
\mathcal{T}^{\pm,\sigma}_{\delta,n_1,n_2}(c)=e\left(\mp\sigma\frac{\overline{\delta'}\delta_0n_1^2n_2}{c'}\right)\frac{\varphi(c'/n_1)}{\varphi(c')}\frac{\mu(\delta_0)}{\delta_0}c\\
\times\sum_{\substack{d_2'f_1f_2=c'\\(d_2',f_1n_1n_2)=1\\(f_1,f_2)=1,\,f_2\mid n_1}}\frac{\mu(f_1)^2\mu(f_2)}{f_1}e\left(\pm\sigma\frac{\overline{\delta'd_2'}\delta_0f_2(n_1')^2n_2}{f_1}\right),
\end{multline}
where $n_1'=n_1/f_2$ and $\mathcal{T}^{\pm,\sigma}_{\delta,n_1,n_2}(c)=0$ otherwise.
\end{lem}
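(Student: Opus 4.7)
The proof will proceed by identifying our sum $\mathcal{T}^{\pm,\sigma}_{\delta,n_1,n_2}(c)$ as a specialization of the general sum $\mathcal{T}^{\pm,\sigma}_{\delta,c_1,n_1,n_2}(c,q)$ evaluated in \citep[Lemma 12]{Blomer2012twisted}, and then reading off the claimed formula from that lemma. The first step is to write both sides of the normalization identity $\mathcal{T}^{\pm,\sigma}_{\delta,n_1,n_2}(c)=c^{-1}\mathcal{T}^{\pm,\sigma}_{\delta,c/(\delta,c),n_1,n_2}(c,1)$ as character sums and compare: Blomer's definition collapses to ours upon taking $q=1$ (removing the quadratic twist) and $c_1=c'=c/\delta_0$ (identifying the inner modulus with $c'/n_1$), the factor $c$ being a harmless difference of normalization. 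Once this reduction is in place, the lemma becomes a direct transcription.

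For the underlying computation (which we quote rather than redo) the natural strategy is to open the Kloosterman sum $S(\overline{\delta'}d,\pm n_2;c'/n_1)$ and apply the Chinese remainder theorem to the modulus $c=\delta_0 c'$ in the $d$-sum. The $\delta_0$-component is a Ramanujan sum $c_{\delta_0}(\pm\sigma)$, which evaluates to $\mu(\delta_0)$ and forces $(\delta_0,c')=1$ on pain of vanishing (whence the case distinction in the statement). The $c'$-component is the delicate piece: here one introduces the factorization $c'=d_2'f_1 f_2$ with $f_2=(c',n_1)$, $f_1$ a squarefree coprime complement, and $d_2'$ the remaining part coprime to $f_1 n_1 n_2$; the orthogonality of additive characters on each factor then produces the explicit weight $\mu(f_1)^2 \mu(f_2)/f_1$ and the phase $e(\pm\sigma\overline{\delta' d_2'}\delta_0 f_2 (n_1')^2 n_2/f_1)$.

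The main algebraic point — and the source of the crucial factor $e(\mp\sigma\overline{\delta'}\delta_0 n_1^2 n_2/c')$ needed to form $\widetilde{\mathcal{W}}^{\pm}_{\sigma}$ via \eqref{W-tilde} — is the separation of the $n_2$-dependence. One completes the inverse $\overline{\delta'}$ to a global residue modulo $c'$, uses reciprocity to pull the $n_2$-linear phase out of the $d$-summation, and is left with the factor $\varphi(c'/n_1)/\varphi(c')$ coming from the counting of admissible $d$. The remaining phase in $n_2$ modulo the smaller modulus $f_1$ is exactly what Blomer's lemma isolates.

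The main obstacle is purely combinatorial: tracking all coprimality conditions through the nested CRT factorizations and checking that the conditions $(\delta_0,c')=1$, $(d_2',f_1 n_1 n_2)=1$, $(f_1,f_2)=1$ and $f_2 \mid n_1$ are exactly what is forced by the non-vanishing of the various Ramanujan-type subsums. Since this bookkeeping is carried out in full detail by Blomer, our contribution here is simply to verify the specialization $q=1$, $c_1=c/(\delta,c)$ and insert the overall $c^{-1}$ normalization, after which \eqref{exp-lem-blomer} follows verbatim from \citep[Lemma 12]{Blomer2012twisted}.
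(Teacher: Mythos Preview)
Your proposal is correct and follows exactly the paper's approach: the paper simply observes the identification $\mathcal{T}^{\pm,\sigma}_{\delta,n_1,n_2}(c)=c^{-1}\mathcal{T}^{\pm,\sigma}_{\delta,c/(\delta,c),n_1,n_2}(c,1)$ and then quotes \citep[Lemma 12]{Blomer2012twisted} verbatim. Your additional exposition of the mechanics behind Blomer's computation (CRT, Ramanujan sums, the factorization $c'=d_2'f_1f_2$) is helpful context but not part of the paper's own argument, which is purely a citation.
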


By a double application of the classical formula $e\left({\bar a}/{b}+{\bar b}/{a}\right)=e\left({1}/{ab}\right)$, we see that the product of of the two exponentials on the right-hand side of \eqref{exp-lem-blomer} equals
$$
e\left(\mp\sigma\frac{\delta_0n_1^2n_2}{\delta'c'}\right)e\left(\pm\sigma\frac{\overline{d_2'}\delta_0f_2(n_1')^2n_2}{\delta'f_1}\right).
$$
The first factor combines perfectly with $\mathcal{W}^{\pm}_{\sigma}\left(\frac{Nn_1^2n_2}{c_1^3};\frac{\sqrt{\delta N}}{c}\right)$ to form $\widetilde{\mathcal{W}^{\pm}_{\sigma}}\left(\frac{Nn_1^2n_2}{c_1^3};\frac{\sqrt{\delta N}}{c}\right)$.

We conclude that
\begin{multline*}
S_{\sigma}(N;\delta)=\frac{\pi^{3/2}}{2}\sum_{\pm}\sum_{\delta_0\delta'=\delta}\frac{\mu(\delta_0)}{\delta_0}\sum_{\substack{d_2',f_1,f_2\\(\delta,d_2'f_1f_2)=1\\(f_1,f_2)=1,\,(d_2',f_1f_2)=1}}\sum_{\substack{n_1\mid f_1\\(d_2',n_1')=1}}\\
\times\sum_{\substack{n_2\\(d_2',n_2)=1}}\mu(f_1)^2\mu(f_2)d_2'\frac{\varphi(d_2'f_1/n_1)}{\varphi(d_2'f_1f_2)}\frac{A(f_2n_1,n_2)}{n_1'n_2}\\
\times e\left(\pm\sigma\frac{\overline{d_2'}\delta_0f_2(n_1')^2n_2}{\delta'f_1}\right)\widetilde{\mathcal{W}^{\pm}_{\sigma}}\left(\frac{N(n_1')^2n_2}{(d_2'f_1)^3f_2};\frac{\sqrt{\delta N}}{\delta_0d_2'f_1f_2}\right).
\end{multline*}
We write $f_1=gn_1'$. Then we have
\begin{multline*}
S_{\sigma}(N;\delta)=\frac{\pi^{3/2}}{2}\sum_{\pm}\sum_{\delta_0\delta'=\delta}\frac{\mu(\delta_0)}{\delta_0}\sum_{\substack{d_2',f_2,g,n_1'\\(\delta,d_2'f_2gn_1')=1\\(f_2,gn_1')=1\\(d_2',f_2gn_1')=1}}\sum_{\substack{n_2\\(d_2',n_2)=1}}\frac{\mu(gn_1')^2\mu(f_2)d_2'}{\varphi(f_2n_1')n_1'}\\
\times\frac{A(f_2n_1',n_2)}{n_2}e\left(\pm\sigma\frac{\overline{d_2'}\delta_0f_2n_1'n_2}{\delta'g}\right)\widetilde{\mathcal{W}^{\pm}_{\sigma}}\left(\frac{Nn_2}{(d_2'g)^3f_2n_1'};\frac{\sqrt{\delta N}}{\delta_0d_2'f_2gn_1'}\right).
\end{multline*}
Since $(\delta_0d_2'f_2n_1',\delta'g)=1$. Let
$$
s=(\delta'g,n_2),\,\,n_2=n_2's,\,\,(n_2',\delta'g/s).
$$
We deduce that
\begin{multline}\label{lastbeforeStPh}
S_{\sigma}(N;\delta)=\frac{\pi^{3/2}}{2}\sum_{\pm}\sum_{\delta_0\delta'=\delta}\frac{\mu(\delta_0)}{\delta_0}\sum_{\substack{f_2,g,n_1'\\(\delta,f_2gn_1')=1\\(f_2,gn_1')=1}}\frac{\mu(gn_1')^2\mu(f_2)}{\varphi(f_2n_1')n_1'}\sum_{s\mid \delta'g}\frac{1}{s}\\
\sum_{\substack{d_2'\\(d_2',\delta f_2gn_1')=1}}\sum_{\substack{n_2'\\(n_2',d_2'\delta'g/s)=1}}\frac{A(f_2n_1',n_2's)d_2'}{n_2'}e\left(\pm\sigma\frac{\overline{d_2'}\delta_0f_2n_1'n_2'}{\delta'g/s}\right)\\
\times\widetilde{\mathcal{W}^{\pm}_{\sigma}}\left(\frac{Nn_2'}{(d_2'g)^3f_2n_1'};\frac{\sqrt{\delta N}}{\delta_0d_2'f_2gn_1'}\right).
\end{multline}
Let 
$$
x:=\frac{Nn_2}{(d_2'g)^3f_2n_1'},\text{ and }D:=\frac{\sqrt{\delta N}}{\delta_0d_2'f_2gn_1'}.
$$
We note that by Lemma \ref{integralrep}, the contribution to the right-hand side of \eqref{lastbeforeStPh} of the terms where 
$$
D\ll T^{1-\epsilon}.
$$
is negligible. This implies that we can impose $d_2'\ll T^{\epsilon}\frac{\sqrt{\delta N}}{T\delta_0f_2gn_1'}$. And hence

$$
x\gg T^{3-\epsilon}\frac{\delta_0^3(f_2n_1')^2n_2'}{(\delta^3N)^{1/2}}\gg T^{\frac{3}{2}-\epsilon},
$$
since we are assuming $\delta^3N\leq T^{3+\epsilon}$. That means that we can apply the second part of Lemma \ref{integralrep} to the remaining terms in the right-hand side of \eqref{lastbeforeStPh}. uppon making a dyadic decomposition of the variables $d_2'$ and $n_2'$, we deduce that except for a negligible term ($O(T^{-A})$), $S_{\sigma}(N;\delta)$ is

\begin{multline}\label{almostthere}
\ll \Delta T^{\epsilon}\sum_{\pm}\sum_{\delta_0\delta'=\delta}\frac{1}{\delta_0}\sum_{\substack{f_2,g,n_1',r\\(\delta,f_2gn_1')=1\\(f_2,gn_1')=1\\(r,\delta f_2gn_1')}}\sum_{s\mid \delta'g}\frac{1}{(f_2g)^{3/2}(n_1')^{5/2}rs^{1/2}}\\
\times\sup_{D_2,N_2}\frac{N^{1/2}}{D_2^{1/2}N_2^{1/2}}\int_{|t|\ll U}\Bigg|\sum_{\substack{D_2<d_2'\leq D_2\\(d_2',\delta f_2gn_1')=1}}\sum_{\substack{N_2<n_2'\leq 2N_2\\(n_2'',d_2''\delta'g/s)=1}}\alpha(d_2'')\beta(n_2'')\\
\times A(f_2n_1',n_2''rs)e\left(\pm\sigma\frac{\overline{d_2''}\delta_0f_2n_1'n_2''}{\delta'g/s}\right)\left(\frac{n_2''}{d_2''}\right)^{it}\Bigg|dt,
\end{multline}
where 

\begin{equation}\label{D2N2}
\begin{cases}
1\ll D_2 \ll \frac{(\delta N)^{1/2}}{T\delta_0f_2gn_1'r},\\
1\ll N_2 \ll \frac{(\delta^3 N)^{1/2}}{\delta_0^3(f_2n_1')^2rs},\\
U\ll T.
\end{cases}
\end{equation}
Note that in particular, we have
\begin{equation}\label{r<}
r\ll \frac{(\delta N)^{1/2}}{T}\ll T^{1/2+\epsilon}.
\end{equation}

\section{The large sieve and end of the proof}

The final step consists in applying some version of the large sieve to the integral in the right-hand side of \eqref{almostthere}, but before we do so we need to decompose the exponential factor     as a combination of characters weighted by Gauss sums. All this is accomplished in the following lemma. We have

\begin{lem}\label{largesieve}
Let $a,b,c\in\Zz$ such that $(ab,c)=1$. Let $D,N\geq 1$. Suppose $\alpha_d, \beta_n\in \Cc$ and $U\geq 1$. Then for any $\epsilon>0$, we have
\begin{multline*}
\int_{-U}^{U}\Big|\underset{(dn,c)=1}{\sum_{d\sim D}\sum_{n\sim N}}\alpha_d\beta_n e\left(\frac{\overline{ad}bn}{c}\right)\left(\frac{n}{d}\right)^{it}\Big|dt\\
\ll c^{1/2}(D+U)^{1/2}(N+U)^{1/2}\|\alpha\|_2\|\beta\|_2,
\end{multline*}
where
$$
\|\alpha\|_2:=\left(\sum_{d}|\alpha_d|^2\right)^{1/2},\;\;\|\beta\|_2:=\left(\sum_{n}|\beta_n|^2\right)^{1/2}.
$$
\end{lem}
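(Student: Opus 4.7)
My plan is to decouple the variables $d$ and $n$ via a Gauss sum expansion of the additive character, and then to apply Cauchy--Schwarz twice together with the hybrid large sieve inequality of Gallagher.

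The first step is to set $\xi = \overline{a}b$, which is coprime to $c$, so that the phase reads $e(\overline{d}\xi n/c)$. Since the sum only involves $d$ and $n$ with $(dn,c)=1$, I can expand the additive character via Gauss sums as
\[
e\!\left(\frac{\overline{d}\,\xi\, n}{c}\right) = \frac{1}{\varphi(c)}\sum_{\chi \bmod c}\chi(\xi)\,\overline{\chi}(d)\,\chi(n)\,\tau(\overline{\chi}).
\]
Substituting this into the sum under the absolute value and factoring produces the bilinear expression
\[
\frac{1}{\varphi(c)}\sum_{\chi \bmod c}\chi(\xi)\,\tau(\overline{\chi})\,A_{\chi}(t)\,B_{\chi}(t),
\]
where $A_{\chi}(t) := \sum_{d \sim D,\,(d,c)=1}\alpha_d\,\overline{\chi}(d)\,d^{-it}$ and $B_{\chi}(t) := \sum_{n \sim N,\,(n,c)=1}\beta_n\,\chi(n)\,n^{it}$ are two independent character-twisted Dirichlet polynomials.

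Using the standard bound $|\tau(\overline{\chi})| \leq c^{1/2}$, I take absolute values inside the $\chi$ sum, integrate over $t \in [-U,U]$, and apply Cauchy--Schwarz first in $t$ and then in $\chi$ to reach the upper bound
\[
\frac{c^{1/2}}{\varphi(c)}\Bigl(\sum_{\chi \bmod c}\int_{-U}^{U}|A_{\chi}(t)|^2\,dt\Bigr)^{1/2}\Bigl(\sum_{\chi \bmod c}\int_{-U}^{U}|B_{\chi}(t)|^2\,dt\Bigr)^{1/2}.
\]
Each factor is then estimated by Gallagher's hybrid large sieve inequality
\[
\sum_{\chi \bmod c}\int_{-U}^{U}\Bigl|\sum_{n \sim N}a_n\,\chi(n)\,n^{it}\Bigr|^2 dt \ll (\varphi(c)U + N)\,\|a\|_2^2,
\]
which yields $\frac{c^{1/2}}{\varphi(c)}(\varphi(c)U+D)^{1/2}(\varphi(c)U+N)^{1/2}\|\alpha\|_2\|\beta\|_2$.

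The final step is elementary: the inequality $(\varphi(c)U+X)(\varphi(c)U+Y) \leq \varphi(c)^2(U+X)(U+Y)$ for $X,Y \geq 0$, verified by direct expansion, concludes the proof. I do not anticipate any serious obstacle: the argument is a textbook application of the hybrid large sieve, and the conceptually important point is the Gauss sum decomposition in the first step, which separates the variables $d$ and $n$ and is ultimately responsible for the two independent square-root savings $(D+U)^{1/2}$ and $(N+U)^{1/2}$.
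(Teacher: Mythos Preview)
Your proof is correct and follows essentially the same route as the paper: expand the additive character via Gauss sums, bound $|\tau(\chi)|\le c^{1/2}$, apply Cauchy--Schwarz to separate the $d$- and $n$-sums, and finish with Gallagher's large sieve. The only cosmetic difference is that the paper applies the purely archimedean mean-value estimate $\int_{-U}^{U}\bigl|\sum a_n n^{it}\bigr|^2\,dt \ll (N+U)\|a\|_2^2$ for each fixed $\chi$ and then sums trivially over the $\varphi(c)$ characters, whereas you invoke the full hybrid large sieve and then discard the extra $\chi$-saving via your final inequality $(\varphi(c)U+X)(\varphi(c)U+Y)\le \varphi(c)^2(U+X)(U+Y)$; both routes yield the same bound.
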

\begin{proof}
For every multiplicative character modulo $c$, we let
$$
\tau(\chi):=\sum_{x\rmod{c}}\chi(x)e\left(\frac{x}{c}\right)
$$
be the Gauss sum associated to $\chi$. Then by orthogonality of characters, the integral on the left-hand side is

$$
\ll\frac{1}{\varphi(c)}\sum_{\chi\rmod{c}}\left|\tau( \chi)\right|\int_{-U}^{U}\Big|\underset{(dn,c)=1}{\sum_{d\sim D}\sum_{n\sim N}}\alpha_d\beta_n \chi(d/n)\left(\frac{n}{d}\right)^{it}\Big|dt.
$$
By using the Cauchy-Schwarz inequality we see that the inner integral is bounded by
$$
\left(\int_{-U}^{U}\Big|\sum_{\substack{d\sim D\\(d,c)=1}}\alpha_d\chi(d)d^{it}\Big|^2dt\right)^{1/2}\left(\int_{-U}^{U}\Big|\sum_{\substack{n\sim N\\(n,c)=1}}\beta_n{\bar \chi}(n)n^{it}\Big|^2dt\right)^{1/2}.
$$
Now the large sieve (see \citep[Theorem 2]{gallagher1970large}) implies that the above expression is
$$
\ll (D+U)^{1/2}(N+U)^{1/2}\|\alpha\|_2\|\beta\|_2.
$$
The Lemma now follows from the classical bound for the Gauss sums.
\end{proof}

Using Lemma \ref{largesieve} for for the integral in \eqref{almostthere}, we may bound it by

$$
\left(\frac{\delta'g}{s}\right)^{1/2}D_2^{1/2}(D_2+U)^{1/2}(N+U)^{1/2}\left(\sum_{n_2''\sim N_2}A(f_2n_1',n_2''rs)^2\right)^{1/2}.
$$
We estimate the sum over $n_2''$ by means of the Rankin-Selberg bound (see \eqref{Rankin-Selberg}) and we recall that because of \eqref{D2N2}, and the inequality $\delta^3N\leq T^{3+\epsilon}$, we have
$$
D_2\ll T^{1/2+\epsilon}/r,\,\,\,N_2\ll T^{3/2+\epsilon}/r,\,\,\,U\ll T.
$$
Putting everything together and applying it to \eqref{almostthere}, we get (recall \eqref{r<})
$$
\frac{S_{\sigma}(N;\delta)}{N^{1/2}}\ll T^{\epsilon}\Delta\delta^{1/2}\sum_{r\ll T^{1/2+\epsilon}}r^{-25/32}\left(\frac{T^{1/2}}{r}+T\right)^{1/2}\left(\frac{T^{3/2}}{r}+T\right)^{1/2},
$$
since all the other sums are convergent. From this we deduce

\begin{align*}
\frac{S_{\sigma}(N;\delta)}{N^{1/2}}&\ll \Delta T^{\epsilon}\sum_{r\ll T^{1/2}}\delta^{1/2}r^{-25/32}\left(\frac{T^{5/4}}{r^{1/2}}+T\right)\\
&\ll \Delta\delta^{1/2}\left(T^{5/4+\epsilon}+T^{39/32+\epsilon}\right).
\end{align*}
We have thus proved Proposition \ref{mustprove} and hence Theorem \ref{GL3}.

\bibliographystyle{plain}
\bibliography{references}

\end{document}